\newtheorem{theorem}{Theorem}[section]
\newtheorem{corollary}[theorem]{Corollary}
\newtheorem{lemma}[theorem]{Lemma}
\theoremstyle{remark}
\theoremstyle{definition}
\newtheorem{defn}[theorem]{Definition}
\newtheorem{example}[theorem]{Example}
\numberwithin{equation}{section}
\numberwithin{theorem}{section}
\newcommand{\N}{{\mathbb N}}
\newcommand{\R}{{\mathbb R}}
\newcommand{\fn}{\!:\!}
\newcommand{\fhat}{\hat{f}}
\providecommand{\abs}[1]{\lvert#1\rvert}
\providecommand{\norm}[1]{\lVert#1\rVert}
\newcommand{\intinf}{\int_{-\infty}^{\infty}}
\begin{document}
\subjclass{Primary 42A38. Secondary 30E20, 42B10.}
\keywords{Fourier transform, inversion, several variables, contour integral,
ordinary differential equation, localization, absolute continuity,
Carath\'{e}odory absolute continuity}
\date{Preprint August 10, 2018.  To appear in {\it American Mathematical Monthly}.}
\title[Fourier transform inversion]
{Fourier transform inversion using an elementary differential equation and a contour integral}
\author{Erik Talvila}
\address{Department of Mathematics \& Statistics\\
University of the Fraser Valley\\
Abbotsford, BC Canada V2S 7M8}
\email{Erik.Talvila@ufv.ca}

\begin{abstract}
Let $f$ be a function on the real line.  
The Fourier transform inversion theorem
is proved under the assumption
that $f$ is absolutely continuous such that
$f$ and $f'$ are Lebesgue integrable.  A function $g$ is defined by $f'(t)-iwf(t)=g(t)$.
This 
differential equation has a well known integral solution using the
Heaviside step function.  An elementary calculation with residues is used to write the Heaviside
step function as a
simple contour integral.
The rest of the proof requires elementary
manipulation of integrals.  Hence, the Fourier transform inversion theorem is proved with very little
machinery.  With only minor changes the method is also used to prove the inversion theorem for functions
of several variables and to prove Riemann's localization theorem.
\end{abstract}

\maketitle

\section{Introduction}\label{sectionintroduction}
If you can solve the differential equation $f'(t)-iwf(t)=g(t)$ and compute a simple contour integral
then you can prove the Fourier transform inversion theorem.

In this paper we prove an elementary inversion formula for the Fourier transform.
Our proof depends on a contour integral representation
of the Heaviside step function and requires
solving a linear first order ordinary differential
equation, for which a solution integral is well known.  
We have stated our results for Lebesgue integrals but it is easy enough to rephrase them for
absolutely convergent
improper Riemann integrals.

If $f\fn\R\to\R$ then its Fourier transform is $\fhat(s)=\intinf e^{-isx}f(x)\,dx$ where $s\in\R$.
A sufficient condition for existence of $\fhat$ on $\R$ is that $\intinf \abs{f(x)}\,dx$ exists,
i.e., $f\in L^1(\R)$.  In this case, $\fhat$ is uniformly continuous on $\R$ and $\lim_{\abs{s}\to\infty}
\fhat(s)=0$ (Riemann--Lebesgue lemma).  The inversion formula is
\begin{equation}
f(x)=\frac{1}{2\pi}\intinf e^{isx}\fhat(s)\,ds.\label{inversionformula}
\end{equation}
However, this equation must be taken with a grain of salt since in general the integral diverges!  
If $f\in L^1(\R)$ then $\fhat$ is continuous so we will have local integrability of $e^{isx}\fhat(s)$
over compact intervals in $s$ but
unless further conditions are imposed on $f$ then we need not have  $\fhat\in L^1(\R)$.

In Theorem~\ref{theoremtransform} we prove an inversion theorem 
under the hypothesis that $f$ is
absolutely continuous with $f$ and $f'$ in
$L^1(\R)$
and then the integral in \eqref{inversionformula} converges as a principal value integral in the 
sense $\lim_{R\to\infty}\int_{-R}^R  e^{isx}\fhat(s)\,ds$.  

Our proof readily extends to functions of several variables, for which we need a discussion of
Carath\'{e}odory's notion of absolute continuity.  We obtain the inversion theorem
in $\R^2$ in Theorem~\ref{theoremtransformxy}, from which it is clear how inversion in $\R^n$ would be proved.  
The method also makes it easy to prove Riemann's localization 
result (Corollary~\ref{corollarylocalisation}).

The literature on Fourier analysis is vast and there are many methods of obtaining formulas similar to \eqref{inversionformula}
under various hypotheses.  One such hypothesis is that if $f$ and $\fhat$ are in $L^1(\R)$ then \eqref{inversionformula}
holds \cite[Theorem~8.26]{folland}.  A sufficient condition for $\fhat\in L^1(\R)$ is that $f,f',f''\in L^1(\R)$ with
$f'$ absolutely continuous.  The result follows integration by parts using Lemma~\ref{lemmaff'} below-
see also \cite{gasquet}.
Some other methods are: extending Fourier series results \cite{bachman}, \cite{boas};
convolution with a summability kernel (approximate identity) in the $L^p$ norm for 
$1\leq p\leq 2$ \cite{bachman}, \cite{folland} (When $p=2$ this is called convergence in the mean sense.);
spectral resolution of the differential operator $d^2/dx^2 + s^2$ using the Titchmarsh--Weyl--Kodaira theorem and
integral representation of the Green function \cite[pp.~1380-1382]{dunfordschwartz}, \cite[7.3]{stakgold},
\cite{titchmarsheigen}, \cite[p.~256]{weidmann}, \cite[p.~194]{yosida}.
Various other methods are covered in \cite{bachman} and \cite{titchmarsh}.

Many of the methods in the above literature require substantial machinery.
The proofs we give do not depend on any other results from
Fourier analysis but only on elementary manipulations of integrals on the real line and in the
complex plane, and solution of a simple ordinary differential equation.  We will see that there are two essential
ingredients.  One is the fundamental theorem of calculus in the form $\int_a^bF'(x)\,dx=F(b)-F(a)$
for an absolutely continuous function $F$ and $d/dx \int_{-\infty}^x f(t)\,dt=f(x)$ almost everywhere for an
$L^1$ function $f$.  Notice that absolute continuity is a local condition and tells us that
$\abs{F'}$ is integrable on compact intervals but says nothing about integrability of $F'$ over $\R$.
Our inversion theorem (Theorem~\ref{theoremtransform}) includes an hypothesis that gives integrability of
the derivative.  The other ingredient in our proof is the Fourier transform of the function $t\mapsto 1/(t-w)$ for a complex parameter
$w$ with nonzero imaginary part.   A contour integral calculation shows this gives  the product of a complex exponential
with the
Heaviside step function.
Hence, once the Fourier integral representation is
known in this one case, it follows for all absolutely continuous functions.    

If the Dirac distribution can be written as a Fourier transform this gives a distributional
(generalized function) version of the inversion theorem.  In some sense our method is an
integral version of this since the Heaviside step function is the integral of the Dirac distribution.
However, our inversion formula holds pointwise everywhere rather than in a distributional sense.

\section{Fourier transform inversion}\label{sectionft}
The key to our proof of the inversion theorem is the following integral representation of the Heaviside step
function.  The Heaviside step function is
\begin{equation}
H(x)=\left\{\begin{array}{cl}
1, \text{ if } x>0\\
1/2, \text{ if } x=0\\
0, \text{ if } x<0.
\end{array}
\right.
\end{equation}
\begin{lemma}\label{lemmaperron1}
Let $p$ be a real number and let $w$ be a complex number with positive imaginary part.  Then
$$
\frac{1}{2\pi i}\intinf \frac{e^{ipz}}{z-w}dz =e^{ipw}H(p).
$$
Carrying out the same calculation with $w=0$ shows that $\int_0^\infty [\sin(x)/x]\,dx=\pi/2$.
\end{lemma}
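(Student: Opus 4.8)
The plan is to read the integral as the principal value $\lim_{R\to\infty}\int_{-R}^R$, since on the real axis the integrand has modulus $1/\abs{x-w}\sim1/\abs{x}$ and is therefore only conditionally convergent. I would then split into the three cases $p>0$, $p<0$, and $p=0$ dictated by the behaviour of $e^{ipz}=e^{ip\,\mathrm{Re}(z)}e^{-p\,\mathrm{Im}(z)}$ on the two half-planes.

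For $p>0$ I would close the contour of integration with a semicircle $C_R$ of radius $R$ in the upper half-plane, where the factor $e^{-p\,\mathrm{Im}(z)}$ decays. Since $1/(z-w)\to0$ uniformly as $\abs z\to\infty$, Jordan's lemma gives $\int_{C_R}\to0$; the only pole $z=w$ lies inside the contour because $\mathrm{Im}(w)>0$, so the residue theorem yields $2\pi i\,e^{ipw}$, and dividing by $2\pi i$ matches $e^{ipw}H(p)$ with $H(p)=1$. For $p<0$ I would instead close in the lower half-plane; Jordan's lemma again kills the semicircle, but now $w$ is not enclosed, so the integral is $0=e^{ipw}H(p)$.

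The case $p=0$ is where I expect the real work, since Jordan's lemma no longer applies. Here the integrand reduces to $1/(z-w)$, whose antiderivative $\log(z-w)$ can be used directly: as $x$ runs over $\R$ the point $x-w$ traverses the horizontal line at height $-\mathrm{Im}(w)<0$, which stays strictly in the lower half-plane and hence never meets the principal branch cut, so a single branch is valid along the path. I would then compute $\int_{-R}^R dx/(x-w)=\log(R-w)-\log(-R-w)$ and track the argument: the modulus contributions cancel while $\arg(R-w)\to0^-$ and $\arg(-R-w)\to-\pi^+$, giving the limit $i\pi$. Dividing by $2\pi i$ produces $1/2=H(0)$, so the delicate branch bookkeeping is exactly what pins down the value at the jump.

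Finally, for the addendum I would rerun the $p>0$ computation with $w=0$, where the pole now sits on the contour. Indenting with a small upper semicircle $C_\epsilon$ of radius $\epsilon$ about the origin and letting $\epsilon\to0$ contributes $-i\pi$, half the residue, so the total contour integral being zero forces the principal value of $\intinf e^{ipx}/x\,dx$ to equal $i\pi$. Writing $e^{ipx}=\cos(px)+i\sin(px)$ and discarding the odd function $\cos(px)/x$ leaves $2i\int_0^\infty\sin(px)/x\,dx$; equating imaginary parts and substituting $u=px$ gives $\int_0^\infty[\sin x/x]\,dx=\pi/2$.
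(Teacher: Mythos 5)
Your proof is correct and follows essentially the same route as the paper: close the contour in the half-plane where $e^{ipz}$ decays, apply the residue theorem, evaluate the $p=0$ case directly (your logarithm-with-branch-tracking versus the paper's real/imaginary split into $\log$ and $\arctan$ terms is an immaterial variation), and indent at the origin to extract $\int_0^\infty [\sin(x)/x]\,dx=\pi/2$. The only substantive difference is that you invoke Jordan's lemma as a black box where the paper derives the explicit bound $\abs{I_R}\leq \frac{2\pi}{pR}\left(1-e^{-pR}\right)$ from $\sin\theta\geq 2\theta/\pi$; the paper records this quantitative estimate deliberately because it is reused in the proof of Theorem~\ref{theoremtransform} to justify the dominated convergence step.
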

If $p\not=0$,
the integral converges conditionally as an improper Riemann integral (and as a Henstock--Kurzweil
integral and as a Cauchy--Lebesgue integral).
If $p=0$ or $w$ is real, the integral exists in the principal value sense.  Sometimes this is known as Perron's lemma.
For a proof see Section~\ref{sectionperron}.
Although it is an old result we include a proof because we will need estimates from it to justify
some limit operations in the proof of Theorem~\ref{theoremtransform}.
The lemma was used by
Perron for finding partial sums of Dirichlet series; for example, \cite[\S11.12]{apostol}.

Now we are ready for the inversion theorem.
\begin{theorem}\label{theoremtransform}
Let $f$ be an absolutely continuous function on $\R$ such that $f$ and $f'$ are in $L^1(\R)$.  For each $x\in\R$,
$$
f(x)=\frac{1}{2\pi}\lim_{R\to\infty}\int_{-R}^R e^{ixs}\intinf e^{-ist}f(t)\,dt\,ds.
$$
\end{theorem}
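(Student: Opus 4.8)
The plan is to exploit the linear first-order differential equation advertised in the introduction. Fix a complex number $w$ with positive imaginary part and set $g(t)=f'(t)-iwf(t)$; since $f,f'\in L^1(\R)$ we have $g\in L^1(\R)$. First I would record two preliminary facts. Because $f$ is absolutely continuous with $f'\in L^1(\R)$, the limit $\lim_{t\to\pm\infty}f(t)$ exists, and since $f\in L^1(\R)$ this limit must be $0$. Consequently an integration by parts (Lemma~\ref{lemmaff'}), legitimate because $f$ vanishes at $\pm\infty$, gives $\widehat{f'}(z)=iz\fhat(z)$, so that $\hat{g}(z)=iz\fhat(z)-iw\fhat(z)=i(z-w)\fhat(z)$ for every $z\in\R$.

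Next I would solve the differential equation. Multiplying by the integrating factor $e^{-iwt}$ turns the equation into $(d/dt)[e^{-iwt}f(t)]=e^{-iwt}g(t)$; integrating from $-\infty$ to $t$ and using $\text{Im}\,w>0$ together with $f(t)\to0$ to annihilate the boundary term at $-\infty$ yields
$$
f(t)=\intinf e^{iw(t-\tau)}H(t-\tau)\,g(\tau)\,d\tau.
$$
Now Lemma~\ref{lemmaperron1} lets me replace the kernel $e^{iw(t-\tau)}H(t-\tau)$ by the contour integral $\frac{1}{2\pi i}\intinf \frac{e^{i(t-\tau)z}}{z-w}\,dz$. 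The goal is then to interchange this $z$-integral with the $\tau$-integral, for after the interchange the inner integral over $\tau$ is exactly $\hat{g}(z)=i(z-w)\fhat(z)$, the factor $z-w$ cancels, and the surviving $z$-integral is $\frac{1}{2\pi}\intinf e^{itz}\fhat(z)\,dz$, which is the claimed formula. It is worth noting that $w$ disappears entirely from the final expression, so no limit in $w$ is required.

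The main obstacle is that the Perron integral converges only conditionally, so Fubini does not apply directly. I would circumvent this by truncating: over the partial range $z\in[-R,R]$ the kernel satisfies $\bigl|e^{i(t-\tau)z}/(z-w)\bigr|=1/\abs{z-w}\leq 1/\text{Im}\,w$ uniformly in $\tau$, so $\abs{g(\tau)}\cdot(2R/\text{Im}\,w)$ dominates and Fubini is valid, giving
$$
\frac{1}{2\pi i}\intinf g(\tau)\int_{-R}^R\frac{e^{i(t-\tau)z}}{z-w}\,dz\,d\tau=\frac{1}{2\pi}\int_{-R}^R e^{itz}\fhat(z)\,dz,
$$
where I have used $\hat{g}(z)=i(z-w)\fhat(z)$ on the right. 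It then remains to let $R\to\infty$ on the left and recover $f(t)$. By Lemma~\ref{lemmaperron1} the inner partial integral converges, for a.e. $\tau$, to $e^{iw(t-\tau)}H(t-\tau)$, so the crux is passing this limit through the outer integral against $g\in L^1(\R)$. For that I would invoke dominated convergence, using the bound on the partial integrals of $e^{ipz}/(z-w)$ that is uniform in both $R$ and $p$ and is furnished by the estimates in the proof of Lemma~\ref{lemmaperron1} (the same boundedness phenomenon that controls $\int_0^R[\sin x/x]\,dx$). Securing this uniform control is the delicate point; once it is in hand, dominated convergence identifies the limit of the left side as $\intinf e^{iw(t-\tau)}H(t-\tau)g(\tau)\,d\tau=f(t)$, and the theorem follows.
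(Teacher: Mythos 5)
Your proposal is correct and follows essentially the same route as the paper: solve $f'-iwf=g$ with the Heaviside kernel, substitute the Perron contour representation from Lemma~\ref{lemmaperron1}, interchange integrals over the truncated range $[-R,R]$ via Fubini using the bound $1/\abs{z-w}\le 1/\eta$, and pass to the limit by dominated convergence using the uniform bound on the partial Perron integrals, with the integration by parts (justified by Lemma~\ref{lemmaff'}) producing the cancellation of $z-w$. The only differences are organizational: you compute $\hat{g}(z)=i(z-w)\fhat(z)$ up front and derive the integral solution of the ODE directly by integrating from $-\infty$, whereas the paper verifies the formula and proves uniqueness separately, then integrates by parts at the end.
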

In the proof of the theorem $f$ is written as the solution of a simple differential equation.  Note, however,
that the inversion formula can hold at points where $f$ is not differentiable, provided $f$ is absolutely
continuous.  At points of differentiability the method of P.R. Chernoff can also be applied to give a proof
\cite{koekoek}.
\begin{proof}
Write $w=\xi+i\eta$ with $\xi\in\R$ and $\eta>0$.
Define $g=f'-iw f$.  This determines $g$ almost everywhere.  Then
$$
f(x)=e^{iwx}\int_{-\infty}^x e^{-iwt}g(t)\,dt.
$$
The condition $\eta>0$ ensures this integral exists for each $x\in\R$.
The fundamental theorem of calculus for Lebesgue integrals shows that $f$ satisfies the differential
equation for almost all $x$.  To show the differential equation has a unique absolutely continuous solution, suppose there were solutions
$u_1$ and $u_2$.  Let $u=u_1-u_2$.  Then $u'(x)-iw\,u(x)=0=e^{iw x}\frac{d}{dx}\left[e^{-iw x}u(x)\right]$.
Notice that $e^{iw x}=e^{-\eta x}[\cos(\xi x)+i\sin(\xi x)]$, which does not vanish for any $x$.
Since $u$ is absolutely continuous we then have $u(x)=ce^{iw x}$ for some constant $c$.
But $u\in L^1(\R)$ so $c=0$.

We can now use the integral representation of the Heaviside step function in Lemma~\ref{lemmaperron1} to write
\begin{equation}
f(x)  =  \intinf H(x-t)e^{iw (x-t)}g(t)\,dt
  =  \frac{1}{2\pi i}\intinf\lim_{R\to\infty}\int_{-R}^R\frac{e^{i(x-t)z}}{z-w}\,dz\,g(t)\,dt.\label{Rlimitint}
\end{equation}
We would like to bring the limit as $R\to\infty$ outside the integral.  Since $f,f'\in L^1(\R)$ we have
$g\in L^1(\R)$ and it suffices
to show the integral over $z$ is bounded as a function of $t$, uniformly as $R\to\infty$.  The proof of 
Lemma~\ref{lemmaperron1} shows this.  In that proof (below), the required integral is the difference of the residue
and the integral $I_R$.  When the imaginary part of $w$ is positive, the residue, $e^{i(x-t)w}$, has modulus at most
$1$.  The estimate in equation \eqref{contourestimate} shows $I_R$ is bounded as $R\to\infty$.
Then, by dominated convergence,
\begin{equation}
f(x)
  =  \frac{1}{2\pi i}\lim_{R\to\infty}\intinf\int_{-R}^R\frac{e^{i(x-t)z}}{z-w}\,dz\,g(t)\,dt.
\end{equation}
The estimate $\abs{e^{i(x-t)z}/(z-w)}\leq 1/\eta$ shows we can use the Fubini--Tonelli theorem
to interchange the $z$ and $t$ integrals.  Then integrate by parts.  We have
\begin{eqnarray*}
f(x)
 & = & \frac{1}{2\pi i}\lim_{R\to\infty}\int_{-R}^R\intinf\frac{e^{i(x-t)z}}{z-w}[f'(t)-iw f(t)]\,dt\,dz\\
 & = &  \frac{1}{2\pi i}\lim_{R\to\infty}\int_{-R}^R\frac{e^{ixz}}{z-w}\left\{\left[e^{-itz}f(t)\right]_{-\infty}^\infty
+i(z-w)\intinf e^{-itz}f(t)\,dt\right\}\,dz\\
 & = & \frac{1}{2\pi}\lim_{R\to\infty}\int_{-R}^R e^{ixz}\intinf e^{-itz}f(t)\,dt\, dz,
\end{eqnarray*}
provided $\lim_{\abs{x}\to\infty}f(x)=0$.  But this is shown in Lemma~\ref{lemmaff'} below.

\end{proof}
\begin{lemma}\label{lemmaff'}
Suppose $f,f'\in L^1(\R)$ and $f$ is absolutely continuous.  Then\\
$\lim_{\abs{x}\to\infty}
f(x)=0$.
\end{lemma}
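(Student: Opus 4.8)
The plan is to exploit the two integrability hypotheses separately: the integrability of $f'$ will force the one-sided limits of $f$ at $\pm\infty$ to exist, and the integrability of $f$ itself will then force those limits to vanish.

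First I would fix a point $a\in\R$ and use the absolute continuity of $f$ together with the fundamental theorem of calculus (valid on each compact interval) to write, for every $x\geq a$,
$$
f(x)=f(a)+\int_a^x f'(t)\,dt.
$$
Since $f'\in L^1(\R)$, the function $x\mapsto\int_a^x f'(t)\,dt$ satisfies the Cauchy criterion as $x\to\infty$: for $x_2>x_1\geq a$ we have $\abs{\int_{x_1}^{x_2}f'}\leq\int_{x_1}^\infty\abs{f'}\to 0$ as $x_1\to\infty$, by absolute convergence of the tail. Hence $\lim_{x\to\infty}f(x)$ exists and is finite; call it $L_+$. Running the same argument to the left gives a finite limit $L_-=\lim_{x\to-\infty}f(x)$.

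It remains to show $L_+=L_-=0$, and this is where the integrability of $f$ enters. I would argue by contradiction: if $L_+\neq 0$, then there is some $N\geq a$ with $\abs{f(x)}\geq\abs{L_+}/2$ for all $x\geq N$, whence $\int_N^\infty\abs{f(x)}\,dx=\infty$, contradicting $f\in L^1(\R)$. The symmetric argument as $x\to-\infty$ forces $L_-=0$.

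The argument is entirely elementary; the only point requiring real care is the first step, the existence of the limits. Absolute continuity alone is a local condition and does not by itself guarantee that $f$ has a limit at infinity, so it is essential to use $f'\in L^1(\R)$ to obtain the Cauchy property of $\int_a^x f'$. Once the limits are known to exist, the global integrability of $f$ closes the argument immediately.
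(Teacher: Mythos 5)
Your proof is correct, but it takes a genuinely different route from the paper's. You first show that $\lim_{x\to\infty}f(x)$ exists, by writing $f(x)=f(a)+\int_a^x f'(t)\,dt$ (valid by absolute continuity and the fundamental theorem of calculus) and invoking the Cauchy criterion on the tail $\int_{x_1}^\infty\abs{f'}\to 0$; you then use $f\in L^1(\R)$ to rule out any nonzero limit. This is the standard two-stage argument and every step is sound. The paper instead never establishes that the limit exists as a separate step: it integrates by parts to obtain the identity $f(x)=\tfrac{1}{x}\int_0^x f(t)\,dt+\tfrac{1}{x}\int_0^x tf'(t)\,dt$, which first yields the bound $\abs{f(x)}\leq\norm{f}_1/x+\norm{f'}_1$ and hence boundedness of $f$, and then a refined version based at $\sqrt{x}$ gives $\abs{f(x)}\leq\norm{f}_1/x+\int_{\sqrt{x}}^x\abs{f'}+\norm{f}_\infty/\sqrt{x}$, with all three terms tending to $0$. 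What the paper's approach buys is a direct quantitative estimate on $\abs{f(x)}$ in terms of the tails of $\norm{f}_1$ and $\norm{f'}_1$; what yours buys is brevity and transparency, cleanly separating the role of $f'\in L^1$ (existence of the limit) from that of $f\in L^1$ (vanishing of the limit). You also correctly flag the one delicate point, namely that absolute continuity alone is local and cannot produce a limit at infinity.
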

\begin{proof}
It suffices to prove the lemma for $x\to\infty$.  Let $x>0$.  Integration by parts establishes
that
$$
f(x)=\frac{1}{x}\int_0^xf(t)\,dt+\frac{1}{x}\int_0^xtf'(t)\,dt.
$$
And,
$$
\abs{f(x)}\leq \frac{\norm{f}_1}{x}+\norm{f'}_1.
$$
Since $f$ is continuous, this shows that $f$ is bounded on $\R$.

Similarly, let $x>1$.  Then
$$
f(x)=\frac{1}{x}\int_{\sqrt{x}}^xf(t)\,dt+\frac{1}{x}\int_{\sqrt{x}}^xtf'(t)\,dt
+\frac{f(\sqrt{x})}{\sqrt{x}}.
$$
And,
$$
\abs{f(x)}\leq \frac{\norm{f}_1}{x}+\int_{\sqrt{x}}^x\abs{f'(t)}\,dt +\frac{\norm{f}_\infty}{\sqrt{x}}.
$$
\end{proof}

If $f\in L^1(\R)$ then $\fhat$ need not be in $L^1(\R)$.  For example, let $f(x)=1$ if $\abs{x}\leq 1$
and $f(x) =0$, otherwise.  Then $\fhat(s)=2\sin(s)/s$ for $s\not=0$ and $\fhat(0)=2$.  Although $\fhat$
is continuous and has limit $0$ at infinity it is not in $L^1(\R)$.  However, it has a conditionally
convergent integral. It is not known if the conditions of Theorem~\ref{theoremtransform} imply
$\fhat\in L^1(\R)$.

It is known from an example due to du Bois-Reymond (1876) that there is a continuous function whose
Fourier series diverges at a point.  The same applies to Fourier transforms.  In the following example,
based on \cite[\S4.12]{hardy},
we construct a continuous function in $L^1(\R)$ for which the inversion formula of Theorem~\ref{theoremtransform}
fails to hold.
\begin{example}\label{hardyexample}
By the Fubini--Tonelli theorem we can write the inversion formula as
$$
f(x)=\frac{1}{\pi}\lim_{R\to\infty}\intinf f(t+x)\frac{\sin(Rt)}{t}\,dt.
$$
Define $f\fn\R\to\R$ by
$$
f(t)=\left\{\begin{array}{cl}
a_k\sin(n_kt), & \pi/n_k\leq t\leq \pi/n_{k-1}\quad\text{ for some } k\in\N\\
0, & \text{ otherwise.}
\end{array}
\right.
$$
The sequences $a_k=1/k^2$ and $N_k=2^{k^3}$ are such that $\sum a_k<\infty$ and $a_k\log(N_k)\to\infty$ as
$k\to\infty$.  Define $n_k=N_1N_2\cdots N_k$ for $k\in\N$ and $n_0=1$.  Then $f$ is continuous and vanishes
outside the interval $[0,\pi]$ so $f\in L^1(\R)$.  Note that $f$ is not absolutely continuous.  For,
\begin{eqnarray*}
\int_0^\pi\abs{f'(t)}\,dt & = & \sum_{k=1}^\infty a_k n_k\int_{\pi/n_k}^{\pi/n_{k-1}}\abs{\cos(n_kt)}\,dt\\
 & = & \sum_{k=1}^\infty a_k\int_{\pi}^{N_k\pi}\abs{\cos(t)}\,dt=2\sum_{k=1}^\infty a_k(N_k-1)=\infty.
\end{eqnarray*}
This shows $f$ is not of bounded variation and so not absolutely continuous.  With this function $f$ the
hypotheses of Theorem~\ref{theoremtransform} are not fulfilled.

Note that $f(0)=0$.  We will show the inversion theorem fails at the origin.  Since $n_k\to\infty$,
it suffices to let $J_k=\int_0^\pi f(t)(\sin(n_kt)/t)\,dt$ and show $J_k$ does not have limit $0$ as $k\to\infty$.
We have
\begin{equation}
J_k=\sum_{m=1}^\infty a_m\int_{\pi/n_m}^{\pi/n_{m-1}}\frac{\sin(n_kt)\sin(n_mt)}{t}\,dt.\label{Jk}
\end{equation}
In this series the term with $m=k$ is
\begin{eqnarray*}
\frac{a_k}{2}\int_{\pi/n_k}^{\pi/n_{k-1}}\frac{1-\cos(2n_kt)}{t}\,dt & = & 
\frac{a_k}{2}\log(n_k/n_{k-1})-\frac{a_k}{2}\int_{2\pi}^{2\pi n_k/n_{k-1}}\frac{\cos(t)}{t}\,dt\\
 & = & \frac{a_k}{2}\log(N_k)-\frac{a_k}{2}\int_{2\pi}^{2\pi N_k}\frac{\cos(t)}{t}\,dt.
\end{eqnarray*}
Integration by parts establishes that if $a>0$ is fixed then $\int_a^y(\cos(t)/t)\,dt$ is bounded for $y>a$.
(For $y=\infty$ the integral converges conditionally.)  
We can then define $C=\sup_{\pi/2\leq x<y}\abs{\int_x^y(\cos(t)/t)\,dt}$
and
$$
\frac{a_k}{2}\int_{\pi/n_k}^{\pi/n_{k-1}}\frac{1-\cos(2n_kt)}{t}\,dt\geq \frac{a_k}{2}\log(N_k)-
\frac{a_k}{2}C.
$$
Since $a_k\to 0$ and $a_k\log(N_k)\to\infty$ this term of the series tends to infinity.  

Now show the
sum over all other terms remains finite as $k\to\infty$.  For $m\not= k$ the summands of \eqref{Jk}
are
\begin{align*}
&\left|\frac{a_m}{2}\int_{\pi/n_m}^{\pi/n_{m-1}}\frac{\cos(\abs{n_k-n_m}t)-\cos((n_k+n_m)t)}{t}\,dt\right|\\
&=\left|\frac{a_m}{2}\int_{\pi\abs{n_k-n_m}/n_m}^{\pi\abs{n_k-n_m}/n_{m-1}}\frac{\cos(t)}{t}\,dt
-\frac{a_m}{2}\int_{\pi(n_k+n_m)/n_m}^{\pi(n_k+n_m)/n_{m-1}}\frac{\cos(t)}{t}\,dt\right|\\
&\leq a_mC.
\end{align*}
We see this because $\abs{n_k\pm n_m}/n_m=\abs{n_k/n_m\pm1}$.  If $k>m$ then $n_k/n_m\geq N_1=2$.
If $k<m$ then $n_k/n_m\leq 1/N_1=1/2$.
\end{example}

Riemann's localization theorem was originally given for Fourier series but applies equally
well to Fourier transforms.  It says that the inversion formula for $f(x)$ in Theorem~\ref{theoremtransform}
only depends on the behavior of $f$ in a neighborhood of $x$.  We get this as a corollary,
following the method of proof in Theorem~\ref{theoremtransform}.
\begin{corollary}[Localization]\label{corollarylocalisation}
Let $f$ be an absolutely continuous function on the interval $[x_1,x_2]$.
Let $x_1<x<x_2$. Then
$$
f(x)=\frac{1}{2\pi}\lim_{R\to\infty}\int_{-R}^R e^{ixs}\int_{x_1}^{x_2} e^{-ist}f(t)\,dt\,ds.
$$
\end{corollary}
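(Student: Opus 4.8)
The plan is to mimic the proof of Theorem~\ref{theoremtransform}, but on the finite interval $[x_1,x_2]$, keeping careful track of the boundary contributions that now appear. Fix $w=\xi+i\eta$ with $\eta>0$ and set $g=f'-iwf$ almost everywhere on $(x_1,x_2)$. Since $f$ is absolutely continuous on the compact interval $[x_1,x_2]$, it is bounded there and $f'\in L^1([x_1,x_2])$, so $g\in L^1([x_1,x_2])$. Applying the fundamental theorem of calculus to $t\mapsto e^{-iwt}f(t)$ gives, for $x_1<x<x_2$,
$$
f(x)=e^{iw(x-x_1)}f(x_1)+\int_{x_1}^{x_2}H(x-t)e^{iw(x-t)}g(t)\,dt,
$$
where the Heaviside factor simply restricts the integral to $[x_1,x]$.

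Next I would replace each exponential-times-Heaviside factor by its contour representation from Lemma~\ref{lemmaperron1}. For $t\ne x$ this gives $H(x-t)e^{iw(x-t)}=\frac{1}{2\pi i}\lim_{R\to\infty}\int_{-R}^R e^{i(x-t)z}/(z-w)\,dz$, and because $x>x_1$ the same lemma (with $p=x-x_1>0$, so $H=1$) rewrites the boundary factor $e^{iw(x-x_1)}$ as $\frac{1}{2\pi i}\lim_{R\to\infty}\int_{-R}^R e^{i(x-x_1)z}/(z-w)\,dz$. The uniform bound on $\int_{-R}^R e^{i(x-t)z}/(z-w)\,dz$ recorded in the proof of Lemma~\ref{lemmaperron1} via \eqref{contourestimate}, together with $g\in L^1([x_1,x_2])$, lets me pull the limit in $R$ outside by dominated convergence exactly as in Theorem~\ref{theoremtransform}; the estimate $\abs{e^{i(x-t)z}/(z-w)}\le 1/\eta$ on the finite rectangle then justifies interchanging the $z$ and $t$ integrals by Fubini--Tonelli.

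Integrating by parts in $t$ on $[x_1,x_2]$ produces
$$
\int_{x_1}^{x_2}e^{-itz}g(t)\,dt=e^{-ix_2 z}f(x_2)-e^{-ix_1 z}f(x_1)+i(z-w)\int_{x_1}^{x_2}e^{-itz}f(t)\,dt,
$$
so the inner integral splits into three pieces. The piece carrying $f(x_1)$ is $-\frac{f(x_1)}{2\pi i}\int_{-R}^R e^{i(x-x_1)z}/(z-w)\,dz$, which cancels identically against the contour version of the boundary term $e^{iw(x-x_1)}f(x_1)$; the piece with the factor $i(z-w)$ collapses, since $\frac{i(z-w)}{2\pi i(z-w)}=\frac{1}{2\pi}$, to $\frac{1}{2\pi}\int_{-R}^R e^{ixz}\int_{x_1}^{x_2}e^{-itz}f(t)\,dt\,dz$, which is the desired expression.

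The main obstacle, and the point at which the hypothesis $x_1<x<x_2$ is actually used, is the surviving boundary piece at $x_2$, namely $\frac{f(x_2)}{2\pi i}\lim_{R\to\infty}\int_{-R}^R e^{i(x-x_2)z}/(z-w)\,dz$. Here $x-x_2<0$, so Lemma~\ref{lemmaperron1} evaluates this limit as $e^{i(x-x_2)w}H(x-x_2)=0$. Thus the boundary term at the left endpoint cancels and the one at the right endpoint vanishes, leaving precisely $f(x)=\frac{1}{2\pi}\lim_{R\to\infty}\int_{-R}^R e^{ixs}\int_{x_1}^{x_2}e^{-ist}f(t)\,dt\,ds$. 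The strict inequalities $x_1<x<x_2$ are essential: it is the signs of $x-x_1$ and $x-x_2$ that make one Heaviside factor equal to $1$ and the other equal to $0$.
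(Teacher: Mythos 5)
Your proposal is correct and follows essentially the same route as the paper: solve $f'-iwf=g$ on $[x_1,x_2]$ with the boundary term $f(x_1)e^{iw(x-x_1)}$, insert the contour representation of $H(x-t)e^{iw(x-t)}$ from Lemma~\ref{lemmaperron1}, justify the limit and Fubini interchanges with the same estimates, integrate by parts, and observe that the $x_1$ boundary piece cancels while the $x_2$ piece vanishes because $H(x-x_2)=0$. The only cosmetic difference is that you make the cancellation at $x_1$ explicit by also writing $e^{iw(x-x_1)}$ as a contour integral, whereas the paper evaluates the two boundary pieces directly as $f(x_i)H(x-x_i)e^{iw(x-x_i)}$; the content is identical.
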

Notice that it is immaterial if $f$ is defined outside the interval $[x_1,x_2]$.
\begin{proof}
With $g=f'-iwf$ we have 
\begin{equation}
f(x)=e^{iwx}\int_{x_1}^x e^{-iwt}g(t)\,dt +f(x_1)e^{iw(x-x_1)}.
\end{equation}
The proof is similar to the corresponding result in Theorem~\ref{theoremtransform}.  Using
Lemma~\ref{lemmaperron1} we have
$$
f(x)  =  \frac{1}{2\pi i}\int_{x_1}^{x_2}\lim_{R\to\infty}\int_{-R}^R \frac{e^{i(x-t)z}}{z-w}\,dz\, g(t)\,dt
+f(x_1)e^{iw(x-x_1)}.
$$
The Heaviside function $H(x-t)$ lets us replace the upper limit of integration in $t$ with $x_2$.
As before, we can bring the $R$ limit outside the integrals and interchange the order of integration.
Then integration by parts yields
\begin{eqnarray*}
f(x) & = & \frac{1}{2\pi i}\lim_{R\to\infty}\int_{-R}^R\frac{e^{ixz}}{z-w}
\left\{\left[e^{-itz}f(t)\right]_{x_1}^{x_2}+i(z-w)\int_{x_1}^{x_2}e^{-itz}f(t)\,dt\right\}dz\\
 & & \qquad+f(x_1)e^{iw(x-x_1)}\\
 & = & f(x_2)H(x-x_2)e^{iw(x-x_2)}+\frac{1}{2\pi}\lim_{R\to\infty}\int_{-R}^Re^{ixz}\int_{x_1}^{x_2}e^{-itz}f(t)\,dt\,dz\\
 & = & \frac{1}{2\pi}\lim_{R\to\infty}\int_{-R}^Re^{ixz}\int_{x_1}^{x_2}e^{-itz}f(t)\,dt\,dz.
\end{eqnarray*}
\end{proof}

To see localization in practice consider the following example.
\begin{example}
First take $f(t)=1$. Then
$$
\int_{x_1}^{x_2}f(t)e^{-ist}\,dt=i\left(\frac{e^{-is{x_2}}-e^{-isx_1}}{s}\right).
$$
And,
\begin{eqnarray*}
i\int_{-R}^R e^{ixs}\left(\frac{e^{-is{x_2}}-e^{-isx_1}}{s}\right)ds 
 & = & \int_{-R}^R \frac{\sin((x_2-x)s)+\sin((x-x_1)s)}{s}ds\\
 & = & \int_{-R(x_2-x)}^{R(x_2-x)}\frac{\sin(s)}{s}ds +\int_{-R(x-x_1)}^{R(x-x_1)}\frac{\sin(s)}{s}ds.
\end{eqnarray*}
As $R\to\infty$ this approaches $2\intinf \frac{\sin(s)}{s}ds=2\pi$, in accordance with 
Theorem~\ref{corollarylocalisation}.  The integral $\intinf \frac{\sin(s)}{s}ds=\pi$ is
computed in Lemma~\ref{lemmaperron1}.

Now consider $f(t)=t^n$ for $n\in\N$.  Then
$$
\int_{x_1}^{x_2}f(t)e^{-ist}\,dt  =  i^n\frac{\partial^n}{\partial s^n}\int_{x_1}^{x_2}e^{-ist}\,dt
  =  i^{n+1}\frac{\partial^n}{\partial s^n}\left(\frac{e^{-is{x_2}}-e^{-isx_1}}{s}\right).
$$
And, integrating by parts $n$ times,
\begin{align}
&i^{n+1}\int_{-R}^R e^{ixs}\frac{\partial^n}{\partial s^n}\left(\frac{e^{-is{x_2}}-e^{-isx_1}}{s}\right)ds\\
&\quad=i^{n+1}\Bigg\{\sum_{k=1}^n(-ix)^{k-1} \left[e^{ixs}\frac{\partial^{n-k}}{\partial s^{n-k}}\left(\frac{e^{-is{x_2}}-e^{-isx_1}}{s}\right)
\right]_{s=-R}^R\label{exampleR}\\
&\qquad\qquad+(-1)^n(ix)^n\int_{-R}^R e^{ixs}\left(\frac{e^{-is{x_2}}-e^{-isx_1}}{s}\right)ds\Bigg\}.\label{exampleRR}
\end{align}
As $R\to\infty$, the term in \eqref{exampleR} vanishes and, as in the case $n=0$ above, the term in 
\eqref{exampleRR} tends to
$2\pi x^n$.
\end{example}

\section{Multivariable}
The method used in Section~\ref{sectionft} to obtain the Fourier integral
representation applies with little change to functions of
several variables.  We will show this for functions of two variables from which it will
be clear how to proceed with $n$ variables.

For functions of two or more variables there is more than one type of absolute continuity.
The most appropriate for the
problem at hand is the Carath\'{e}odory definition, which we discuss below.
\begin{theorem}\label{theoremtransformxy}
Let $f\in C^2(\R^2)$ such that the functions $f$, $f_x$, $f_y$ and $f_{xy}$ are all in $L^1(\R^2)$.
Then for each $(x,y)\in\R^2$,
$$
f(x,y)=\frac{1}{(2\pi)^2}\lim_{R_1\to\infty}\lim_{R_2\to\infty}\int_{-R_1}^{R_1}\int_{-R_2}^{R_2}\! e^{i(xs+yt)}
\intinf\intinf\! e^{-i(su+tv)}f(u,v)\,dv\,du\,dt\,ds.
$$
\end{theorem}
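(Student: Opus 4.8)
The plan is to run the one-variable argument of Theorem~\ref{theoremtransform} in each variable in turn. Fix $(x,y)$ and choose $w_1=\xi_1+i\eta_1$ and $w_2=\xi_2+i\eta_2$ with $\eta_1,\eta_2>0$. Introduce the second order operator $L=(\partial_x-iw_1)(\partial_y-iw_2)$ and set $g=Lf=f_{xy}-iw_2f_x-iw_1f_y-w_1w_2f$. The four hypotheses $f,f_x,f_y,f_{xy}\in L^1(\R^2)$ are exactly what is needed to conclude $g\in L^1(\R^2)$, and the $C^2$ assumption guarantees that $f_{xy}=f_{yx}$ and that every section of $f$ is absolutely continuous.

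First I would establish the integral representation
\[
f(x,y)=e^{iw_1x+iw_2y}\int_{-\infty}^x\int_{-\infty}^y e^{-iw_1u-iw_2v}g(u,v)\,dv\,du.
\]
Since every section $v\mapsto f(x,v)$ is absolutely continuous, the one-variable solution formula from the proof of Theorem~\ref{theoremtransform} applies in the $y$ variable to give $f(x,y)=e^{iw_2y}\int_{-\infty}^y e^{-iw_2v}(\partial_v-iw_2)f(x,v)\,dv$; applying the same formula again in the $x$ variable to $(\partial_v-iw_2)f$ and using $(\partial_u-iw_1)(\partial_v-iw_2)f=g$ produces the display. The existence of the inner integrals is ensured by $\eta_1,\eta_2>0$, exactly as in the one-variable case.

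Next I would use Lemma~\ref{lemmaperron1} twice, once in each variable, to replace $H(x-u)e^{iw_1(x-u)}$ and $H(y-v)e^{iw_2(y-v)}$ by their contour integrals, turning the representation into a product of two $z$-integrals tested against $g$. I would then move the $R_1,R_2$ limits outside the $(u,v)$-integration by dominated convergence and interchange the $z$- and $(u,v)$-integrations by Fubini--Tonelli; the uniform bounds supplied by the proof of Lemma~\ref{lemmaperron1} (the residue has modulus at most $1$ and $I_R$ is controlled by \eqref{contourestimate}) apply verbatim to each factor, so this is a bookkeeping extension of the one-variable estimate rather than a new idea. Finally I would integrate by parts once in $u$ and once in $v$: each integration converts $(\partial-iw_j)$ into a factor $i(z_j-w_j)$ that cancels the denominator $z_j-w_j$, and the constant $i^2/(2\pi i)^2$ collapses to $1/(2\pi)^2$, yielding the stated formula with $s=z_1$ and $t=z_2$.

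The main obstacle is showing that every boundary term from the two integrations by parts vanishes; this is the two-variable analogue of Lemma~\ref{lemmaff'}. The $v$-integration leaves $[\,e^{-ivz_2}(\partial_u-iw_1)f(u,v)\,]_{v=-\infty}^{\infty}$ and the $u$-integration leaves $[\,e^{-iuz_1}f(u,v)\,]_{u=-\infty}^{\infty}$, so I need $f(u,v)\to0$ and $f_u(u,v)\to0$ along the relevant coordinate directions. These follow from applying Lemma~\ref{lemmaff'} to one-dimensional sections: by Fubini, $f,f_x\in L^1(\R^2)$ make $u\mapsto f(u,v)$ and $u\mapsto f_x(u,v)$ integrable for almost every $v$, while $f_x,f_{xy}\in L^1(\R^2)$ make $v\mapsto f_x(u,v)$ and $v\mapsto f_{xy}(u,v)$ integrable for almost every $u$; since the boundary terms occur inside integrals in $u$ and in $v$, control on almost every section suffices. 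This is precisely where Carath\'{e}odory absolute continuity enters, as it is the hypothesis that guarantees the sectional absolute continuity and integrability needed to invoke the one-variable lemma, and the four separate $L^1$ conditions in the statement are arranged so that each boundary term is killed by Lemma~\ref{lemmaff'} on the appropriate family of sections.
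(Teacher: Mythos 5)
Your proposal follows essentially the same route as the paper's proof: the paper likewise forms $g=(\partial_x-iw)(\partial_y-iw)f$ (using a single $w$ where you use $w_1,w_2$), writes $f$ as the double Heaviside integral of $g$, applies Lemma~\ref{lemmaperron1} once in each variable, moves the limits and swaps integrals with the same dominated convergence and Fubini--Tonelli estimates, and kills the boundary terms by applying Lemma~\ref{lemmaff'} to one-dimensional sections exactly as you describe. The only cosmetic differences are that the paper justifies the integral representation by verifying the partial differential equation and proving uniqueness through the general solution of $\phi_{xy}=0$ rather than by iterating the one-variable solution formula, and that under the stated $C^2$ hypothesis the Carath\'{e}odory condition you invoke at the end is not needed for the theorem itself (it enters only in Corollary~\ref{corollarycaratheodory2}).
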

We have given elementary conditions under which the inversion theorem holds.  Following the proof of the
theorem we consider how these can be weakened.

\begin{proof}
Define function $g\fn\R^2\to\R$ by $g(x,y)=f_{xy}(x,y)-iw[f_x(x,y)+f_y(x,y)]-w^2f(x,y)$ where $w=\xi+i\eta$ with $\xi,\eta\in\R$,
$\eta>0$.  Subscripts denote partial derivatives with respect to the indicated variable.  Then
\begin{equation}
f(x,y)=e^{iw(x+y)}\int_{-\infty}^x\int_{-\infty}^y e^{-iw(s+t)}g(s,t)\,dt\,ds.\label{fxy}
\end{equation}
Partial differentiation and the Fubini--Tonelli theorem show that $f$ satisfies the partial differential equation.
To show all solutions
of this partial differential equation are given by this formula, suppose there were two solutions and
$F$ was their difference.  Then
\begin{eqnarray}
0 & = & F_{xy}(x,y)-iw[F_x(x,y)+F_y(x,y)]-w^2F(x,y)\notag\\
 & = & e^{iw(x+y)}\partial_{xy}\left[e^{-iw(x+y)}F(x,y)\right].\label{partiale}
\end{eqnarray}
The solutions of the partial differential equation $\phi_{xy}=0$ are $\phi(x,y)=A(x)+B(y)$
for differentiable functions $A$ and $B$ of one variable.  But then
$F(x,y)=e^{i(\xi+i\eta)(x+y)}(A(x)+B(y))$.   By the Fubini--Tonelli theorem the function
$x\mapsto F(x,y)$ is in $L^1(\R)$ for almost all $y\in\R$.  So the function
$x\mapsto e^{-\eta x}\abs{A(x)+B(y)}$ is in $L^1(\R)$ for almost all $y\in\R$.
And the function
$y\mapsto e^{-\eta y}\abs{A(x)+B(y)}$ is in $L^1(\R)$ for almost all $x\in\R$.  This shows $A$ and
$B$ are constants with $A+B=0$.

Using Lemma~\ref{lemmaperron1} we can write
\begin{eqnarray}
f(x,y) & = & \int_{-\infty}^{\infty}\int_{-\infty}^{\infty}H(x-s)H(y-t)
e^{iw(x-s)}e^{iw(y-t)}g(s,t)\,dt\,ds\label{fHH}\\
 & = & -\frac{1}{(2\pi)^2}\int_{-\infty}^{\infty}\int_{-\infty}^{\infty}
\int_{-\infty}^{\infty}\frac{e^{i(x-s)z_1}}{z_1-w}\int_{-\infty}^{\infty}\frac{e^{i(y-t)z_2}}{z_2-w}
g(s,t)\,dz_2\,dz_1\,dt\,ds.\notag
\end{eqnarray}
The rest of the proof follows as in the proof of Theorem~\ref{theoremtransform}.
We need the following limits:
$\lim_{\abs{y}\to\infty}f(x,y)=0$ for each $x$,
$\lim_{\abs{x}\to\infty}f(x,y)=0$ for each $y$,
$\lim_{\abs{y}\to\infty}f_x(x,y)=0$ for each $x$ (or, if the $s$ and $t$ integrals are interchanged,
$\lim_{\abs{x}\to\infty}f_y(x,y)=0$ for each $y$).
These follow from the hypotheses of the theorem and Lemma~\ref{lemmaff'}.
\end{proof}

The hypothesis $f\in C^2(\R^2)$ in the theorem can be weakened in several ways.  To prove $f$ 
satisfies the partial differential equation we only need $f_{xy}=f_{yx}$
almost everywhere.  There is an extensive literature on equality of mixed partial
derivatives.  See \cite{minguzzi} and references.  See \cite{mykhaylyuk} for conditions for
equality and seemingly reasonable conditions under which the mixed partial derivatives can
fail to be equal on a set of positive measure. Some of these
conditions are quite complicated.  A simple one due to Currier \cite{currier} is existence
of $f_x$ and $f_y$ everywhere and existence of $f_{xx}$,  $f_{xy}$,  $f_{yx}$,  $f_{yy}$
almost everywhere.

In \eqref{fxy}, the two iterated integrals are equal by
the Fubini--Tonelli theorem, provided $f$, $f_x$, $f_y$ and $f_{xy}$ are all in $L^1(\R^2)$.
The usual version is proved, for example, in
\cite{folland}.  An extension is given by Moricz in \cite{moricz}.

Following \eqref{fHH} we integrate by parts.
This requires that for almost all $x\in\R$ the function
$y\mapsto f_x(x,y)$ be absolutely continuous on $\R$ and $\lim_{\abs{y}\to\infty}f_x(x,y)=0$.
Notice that the assumptions $f_x\in L^1(\R^2)$ and $f_{xy}\in L^1(\R^2)$ and the
Fubini--Tonelli theorem tell us that the functions
$y\mapsto f_x(x,y)$ and $y\mapsto f_{xy}$ are in $L^1(\R)$ for almost all $x\in\R$.
Lemma~\ref{lemmaff'} then says $\lim_{\abs{y}\to\infty}f_x(x,y)=0$.
Or, if the $s$ and $t$ integrals in \eqref{fHH} are interchanged we can  repeat the above with $x$ and $y$ interchanged.  Then we also require
the function $x\mapsto f(x,y)$ be absolutely continuous on $\R$ for almost all $y\in\R$ and
$\lim_{\abs{x}\to\infty}f(x,y)=0$.  As above, this limit follows from Lemma~\ref{lemmaff'}
and the assumptions $f,f_x\in L^1(\R^2)$.
And a similar condition with $x$ and $y$ interchanged.

\begin{corollary}\label{corollarycaratheodory}
The conclusion of Theorem~\ref{theoremtransformxy} holds if
$f\fn\R^2\to\R$ such that $f_x$ and $f_y$ exist on $\R^2$ and
$f_{xx}$,  $f_{xy}$,  $f_{yx}$,  $f_{yy}$
exist almost everywhere such that
$f$, $f_x$, $f_y$ and $f_{xy}$ are all in $L^1(\R^2)$;
the function $x\mapsto f(x,y)$ is absolutely continuous on $\R$ for each $y\in\R$,
the function $y\mapsto f(x,y)$ is absolutely continuous on $\R$ for each $x\in\R$,
the function $y\mapsto f_x(x,y)$ is absolutely continuous on $\R$ for almost every $x\in\R$
(or, the function $x\mapsto f_y(x,y)$ is absolutely continuous on $\R$ for almost every $y\in\R$).
\end{corollary}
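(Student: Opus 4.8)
The plan is to run through the proof of Theorem~\ref{theoremtransformxy} line by line and observe that the hypothesis $f\in C^2(\R^2)$ was used in exactly three places, each now supplied by one of the weaker assumptions, while the remaining steps are measure-theoretic manipulations that never saw the smoothness. Concretely, $C^2$ entered (i) when partial differentiation together with Fubini--Tonelli verified that the right side of \eqref{fxy} satisfies the partial differential equation, and in the accompanying uniqueness argument; (ii) in the equality of the two orders of iterated integration in \eqref{fxy}; and (iii) in the integration by parts following \eqref{fHH}. The task is to certify that each of these survives the drop in regularity.

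For (i), I would first note that $g=f_{xy}-iw(f_x+f_y)-w^2f$ is meaningful almost everywhere, since $f_x$ and $f_y$ exist everywhere and $f_{xy}$ exists almost everywhere. The only spot where the order of differentiation matters is the identity $e^{iw(x+y)}\partial_{xy}[e^{-iw(x+y)}f]=g$, which needs $f_{xy}=f_{yx}$ almost everywhere; by Currier's criterion \cite{currier}, recalled just before the statement, the everywhere existence of $f_x,f_y$ together with the almost-everywhere existence of all four second partials yields exactly this. To recover \eqref{fxy} from the identity I would integrate twice, applying the fundamental theorem of calculus first in $y$ to the section $y\mapsto f_x(x,y)$ and then in $x$ to the section $x\mapsto f(x,y)$; this is legitimate precisely because those sections are absolutely continuous, which are stated hypotheses, and because the integrated terms at $-\infty$ vanish by the decay discussed below. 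The uniqueness portion carries over verbatim, since the deduction that $\phi_{xy}=0$ forces $\phi(x,y)=A(x)+B(y)$, followed by the $L^1$ elimination of $A$ and $B$, used only absolute continuity and integrability.

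For (ii), the equality of the two iterated integrals in \eqref{fxy} is a direct application of Fubini--Tonelli requiring only $f,f_x,f_y,f_{xy}\in L^1(\R^2)$, which are assumed. For (iii), the integration by parts after \eqref{fHH} requires that for almost every $x$ the section $y\mapsto f_x(x,y)$ be absolutely continuous with $\lim_{\abs{y}\to\infty}f_x(x,y)=0$; the absolute continuity is the final hypothesis of the corollary, and the decay follows from Lemma~\ref{lemmaff'} once Fubini--Tonelli shows $y\mapsto f_x(x,y)$ and $y\mapsto f_{xy}(x,y)$ lie in $L^1(\R)$ for almost every $x$. The remaining boundary limits $\lim_{\abs{y}\to\infty}f(x,y)=0$ and $\lim_{\abs{x}\to\infty}f(x,y)=0$ used to discard boundary terms are obtained the same way, from Lemma~\ref{lemmaff'} together with the sectional absolute continuity and $f,f_x,f_y\in L^1(\R^2)$. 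The parenthetical alternative in the statement corresponds to interchanging the $s$ and $t$ integrals in \eqref{fHH} and repeating the argument with $x$ and $y$ exchanged, which is why only one of the two symmetric absolute-continuity conditions on the first partials is needed.

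I expect the main obstacle to be step (i): reconciling the pointwise, almost-everywhere partial differential equation with the global integral representation \eqref{fxy} when the second derivatives exist only almost everywhere. The delicate points are justifying the Clairaut symmetry $f_{xy}=f_{yx}$ in this low-regularity setting, handled by Currier's theorem, and checking that the one-dimensional fundamental theorem of calculus applies to the correct sections, which is exactly what the sectional absolute-continuity hypotheses were introduced to guarantee. Once these are secured, the remainder is the bookkeeping already carried out in the discussion preceding the corollary.
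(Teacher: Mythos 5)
Your proposal is correct and follows essentially the same route as the paper, whose justification for this corollary is the discussion preceding its statement: isolate the three uses of $C^2$ (verifying the partial differential equation via Currier's criterion for $f_{xy}=f_{yx}$ almost everywhere, the Fubini--Tonelli interchange, and the integration by parts with decay supplied by Lemma~\ref{lemmaff'} applied to the sections), and check that each is covered by the weakened hypotheses. Your added detail on recovering \eqref{fxy} by two applications of the one-variable fundamental theorem of calculus to the absolutely continuous sections is exactly the intended reading.
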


For $L^1$ functions of one variable, absolute continuity gives 
necessary and sufficient conditions under which derivatives can
be integrated and under which indefinite integrals can be differentiated.
There does not appear to be a single such condition for integration in
$\R^2$.  There are notions of absolute continuity for functions of 
two variables due to Carath\'{e}odory, Tonelli, and other authors.
Each extends some properties of absolute continuity on the real line to the
plane, while other properties are lost.   See \cite[p.~169]{saks} for Tonelli's definition
and
\cite{sremr} for references to other authors.  

Carath\'{e}odory's definition seems the most relevant here.
\begin{defn}[Carath\'{e}odory absolute continuity]
Let $F\fn\R^2\to\R$ such that
\begin{enumerate}
\item[(a)]
For each $\epsilon>0$ there is $\delta>0$ such that if $P_i$ are mutually disjoint open
intervals with $\sum_{j=1}^k\abs{P_j}<\delta$ then $\sum_{j=1}^k\abs{F(P_j)}<\epsilon$.
Here, $\abs{P_j}$ is the area of rectangle $P_j$ and $F((x_1,y_1)\times(x_2,y_2))
=F(x_1,y_1)-F(x_2,y_1)-F(x_1,y_2)+F(x_2,y_2)$.
\item[(b)]
The function $x\mapsto F(x,y_0)$ is absolutely continuous on the real line for some fixed $y_0\in\R$;
the function $y\mapsto F(x_0,y)$ is absolutely continuous on the real line for some fixed $x_0\in\R$.
\end{enumerate}
\end{defn}

This is a slight modification of Carath\'{e}odory's definition. See \cite{sremr} where there is
also a 
reference to Carath\'{e}odory's original work.   This type of absolute continuity implies equality of the
mixed partial derivatives and the one-variable absolute continuity conditions in Corollary~\ref{corollarycaratheodory}.
It also implies existence of a function $\phi\in L^1(\R^2)$ so that
$f(x,y)=\int_{-\infty}^x\int_{-\infty}^y \phi(s,t)\,dt\,ds +\chi(x)+\psi(y)$ for some absolutely continuous
functions $\chi$, $\psi$ on the real line, as we saw in the proof of Theorem~\ref{theoremtransformxy}.
Hence, the hypotheses can be replaced with Carath\'{e}odory absolute continuity.
\begin{corollary}\label{corollarycaratheodory2}
The conclusion of Theorem~\ref{theoremtransformxy} holds if
$f\fn\R^2\to\R$ such that
$f$, $f_x$, $f_y$ and $f_{xy}$ are all in $L^1(\R^2)$ and $f$ is absolutely continuous
in the sense of Carath\'{e}odory.
\end{corollary}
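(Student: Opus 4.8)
The plan is to reduce to Corollary~\ref{corollarycaratheodory}. Specifically, I would show that Carath\'{e}odory absolute continuity, together with $f,f_x,f_y,f_{xy}\in L^1(\R^2)$, yields the two properties that actually drive the proof of Theorem~\ref{theoremtransformxy}: equality of the mixed partials almost everywhere, and the one-variable absolute-continuity conditions used there (that $x\mapsto f(x,y)$ and $y\mapsto f(x,y)$ are absolutely continuous for each fixed value of the other variable, and that $y\mapsto f_x(x,y)$ is absolutely continuous for almost every $x$). The unifying device is the representation $f(x,y)=\int_{-\infty}^x\int_{-\infty}^y \phi(s,t)\,dt\,ds+\chi(x)+\psi(y)$ with $\phi\in L^1(\R^2)$ and $\chi,\psi$ absolutely continuous on $\R$, from which all of these follow.

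First I would extract the density $\phi$ from condition (a) of the definition. Condition (a) says precisely that the rectangle increment $P\mapsto F(P)$ is absolutely continuous as an interval (set) function: its values are uniformly small on finite families of disjoint rectangles of small total area. By a Vitali-covering and Radon--Nikodym argument (as in \cite{saks} or \cite{sremr}), such an interval function is the indefinite integral of an integrable density; that is, there is $\phi\in L^1(\R^2)$ with $\int_P\phi=F(P)$ for every rectangle $P$. Differentiating $F$ over rectangles shrinking to a point (Lebesgue differentiation) identifies $\phi=f_{xy}$ almost everywhere, so $\phi$ is exactly the $L^1$ function supplied by hypothesis. This step---promoting the ``small on small area'' condition to a genuine integral representation---is the \emph{main obstacle}, since it is the one place where real measure-theoretic content enters rather than bookkeeping.

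Next I would combine this with condition (b). Choosing the reference point $(x_0,y_0)$ from (b) and applying $\int_P\phi=F(P)$ to the rectangle with corners $(x_0,y_0)$ and $(x,y)$ (with the usual orientation conventions for general position) gives
$$f(x,y)=\int_{x_0}^x\int_{y_0}^y\phi(s,t)\,dt\,ds+f(x,y_0)+f(x_0,y)-f(x_0,y_0).$$
Since $\phi\in L^1(\R^2)$, the marginal integrals $\int_{-\infty}^x\int_{-\infty}^{y_0}\phi$ and $\int_{-\infty}^{x_0}\int_{-\infty}^{y}\phi$ are absolutely continuous functions of $x$ and of $y$ respectively (indefinite integrals of $L^1(\R)$ functions, by the Fubini--Tonelli theorem), while $f(x,y_0)$ and $f(x_0,y)$ are absolutely continuous by (b). Absorbing these terms into $\chi$ and $\psi$ yields the displayed representation.

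Finally I would read off the needed conditions from the representation. Differentiating gives $f_{xy}=f_{yx}=\phi$ almost everywhere (the term $\chi(x)+\psi(y)$ contributes nothing to a mixed partial), so the mixed partials agree a.e. For each fixed $y$ the map $x\mapsto\int_{-\infty}^x\bigl(\int_{-\infty}^y\phi(s,t)\,dt\bigr)\,ds$ is the indefinite integral of the $L^1(\R)$ function $s\mapsto\int_{-\infty}^y\phi(s,t)\,dt$, hence absolutely continuous, and adding $\chi$ shows $x\mapsto f(x,y)$ is absolutely continuous; symmetrically for $y\mapsto f(x,y)$. For almost every $x$ one has $f_x(x,y)=\int_{-\infty}^y\phi(x,t)\,dt+\chi'(x)$, and for such $x$ the map $y\mapsto f_x(x,y)$ is the indefinite integral of $\phi(x,\cdot)\in L^1(\R)$ plus a constant, hence absolutely continuous. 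With $f,f_x,f_y,f_{xy}\in L^1(\R^2)$ given, all the properties required by the proof of Corollary~\ref{corollarycaratheodory} now hold, and the inversion formula follows from it.
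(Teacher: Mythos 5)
Your proposal is correct and follows essentially the same route as the paper: the paper's justification is the paragraph preceding the corollary, which asserts that Carath\'{e}odory absolute continuity yields equality of the mixed partials, the one-variable absolute-continuity conditions, and the representation $f(x,y)=\int_{-\infty}^x\int_{-\infty}^y\phi(s,t)\,dt\,ds+\chi(x)+\psi(y)$ with $\phi\in L^1(\R^2)$, and then reduces to Corollary~\ref{corollarycaratheodory}. You simply supply more detail than the paper does for the measure-theoretic step (extracting $\phi$ from condition (a) via the theory of absolutely continuous interval functions), which the paper delegates to the cited reference of \v{S}remr.
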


\section{Perron's Lemma}\label{sectionperron}
In this section we prove Lemma~\ref{lemmaperron1}.
\begin{proof}
Write $z=x+iy$ and $w=\xi+i\eta$ with $\eta>0$.  Let $R$ be a real
number larger than $2\abs{w}$. 

Suppose $p>0$.  Let $\Gamma_R$ be the upper half-circle $z=Re^{i\theta}$ for $0\leq\theta\leq\pi$.   
Let $\Gamma$ be the closed contour consisting of $\Gamma_R$ oriented counterclockwise and
the $x$-axis from $x=-R$ to $x=R$.
The integrand is analytic in $\Gamma$ except for a simple pole at $z=w$, with residue $e^{ipw}$.
By Cauchy's theorem,
\begin{equation}
\frac{1}{2\pi i}\oint_\Gamma\frac{e^{ipz}}{z-w}dz= e^{ipw}.\label{cauchy}
\end{equation}

Now show the integral over $\Gamma_R$ vanishes as $R$ tends to
infinity.  
On $\Gamma_R$ we have
$$
I_R=\int_{0}^{\pi}\frac{e^{ipR(\cos\theta+i\sin\theta)}iRe^{i\theta}\,d\theta}{Re^{i\theta}-w}
$$
so that 
$$
\abs{I_R}  \leq  \frac{2R}{R-\abs{w}}\int_{0}^{\pi/2}e^{-pR\sin\theta}\,d\theta.
$$
For $0\leq\theta\leq\pi/2$ we have $\sin\theta\geq 2\theta/\pi$ (Jordan's inequality).  This gives
\begin{equation}
\abs{I_R}  \leq  \frac{2}{1-\abs{w}/R}\int_{0}^{\pi/2}e^{-2pR\theta/\pi}\,d\theta
  \leq  \frac{2\pi}{pR}\left[1-e^{-pR}\right]
  \to  0 \text{ as } R\to\infty.\label{contourestimate}
\end{equation}

When $p<0$ we use the lower half-circle given by $z=Re^{i\theta}$ for $-\pi\leq\theta\leq 0$.  The
analysis is similar except that now the integrand is analytic and we get zero for the required
integral.

When $p=0$ the integral exists only in the principal value sense.  We have
\begin{eqnarray*}
\int_{-R}^R \frac{dx}{(x-\xi)-i\eta}   & = &  \int_{-R}^R \frac{(x-\xi)\,dx}{(x-\xi)^2+\eta^2}
+ i\eta\int_{-R}^R \frac{dx}{(x-\xi)^2+\eta^2}\\
 & = & \frac{1}{2}\log\left[\frac{(1-\xi/R)^2+(\eta/R)^2}{(1+\xi/R)^2+(\eta/R)^2}\right]\\
 & & \qquad+i\left[
\arctan\left(\frac{R-\xi}{\eta}\right)+\arctan\left(\frac{R+\xi}{\eta}\right)\right]\\
 & \to &  i\pi \text{ as } R\to\infty.
\end{eqnarray*}

If $w=0$ and $p>0$ then 
$$
\int_{-R}^R \frac{e^{ipz}}{z}dz=2i\int_{0}^R \frac{\sin(px)}{x}dx
$$
since the real part is odd.  The same calculation as above shows $I_R\to 0$ as $R\to\infty$.
Now the pole is at the origin so we need to let $z=\epsilon e^{i\theta}$ and integrate over
a semicircle of radius $\epsilon$ for $0\leq\theta\leq\pi$.  Using dominated convergence, this
becomes
$$
\int_0^\pi\frac{e^{ip\epsilon(\cos\theta+i\sin\theta)}\epsilon i e^{i\theta}}{\epsilon
e^{i\theta}}d\theta=i \int_0^\pi e^{ip\epsilon\cos\theta}e^{-p\epsilon\sin\theta}\,d\theta\to i\pi
 \text{ as } \epsilon\to 0.
$$
\end{proof}

When $p\not=0$ and the imaginary part of $w$ is positive 
it is also possible to use a rectangular contour with one edge on the
$x$-axis.
This shows that the two integrals $\int_0^\infty [e^{ipz}/(z-w)]\,dz$
and $\int^0_{-\infty}[e^{ipz}/(z-w)]\,dz$ exist independently.
However, the estimate on $I_R$ used in the proof of Theorem~\ref{theoremtransform} required
the symmetric form.

\end{document}